\documentclass[11pt]{amsart}

\usepackage{latexsym}
\usepackage{amsmath}
\usepackage{amssymb}

\usepackage{epsfig}
\usepackage{epic}

\usepackage{enumerate}
\usepackage{algorithm}

\usepackage[noend]{algpseudocode2}
\algrenewcommand\algorithmicindent{1em}

\sloppy
\setlength{\unitlength}{1mm}
\parskip=1em

\newtheorem{theorem}{Theorem}[section]
\newtheorem{lemma}[theorem]{Lemma}

\newcommand{\cN}{{\mathcal N}}

\newcommand{\cT}{{\mathcal T}}

\begin{document}

\title{Reticulation-Visible Networks}

\author{Magnus Bordewich}
\address{School of Engineering Computer Sciences, Durham University, Durham DH1 3LE, United Kingdom}
\email{m.j.r.bordewich@durham.ac.uk}

\author{Charles Semple}
\address{Biomathematics Research Centre, School of Mathematics and Statistics, University of Canterbury, Christchurch, New Zealand}
\email{charles.semple@canterbury.ac.nz}

\thanks{The second author was supported by the Allan Wilson Centre, and the New Zealand Marsden Fund.}

\keywords{Phylogenetic network, reticulation-visible network, {\sc Tree Containment} problem.}

\subjclass{05C85, 68R10}

\date{\today}

\begin{abstract}
Let $X$ be a finite set, $\cN$ be a reticulation-visible network on $X$, and $\cT$ be a rooted binary phylogenetic tree. We show that there is a polynomial-time algorithm for deciding whether or not $\cN$ displays $\cT$. Furthermore, for all $|X|\ge 1$, we show that $\cN$ has at most $8|X|-7$ vertices in total and at most $3|X|-3$ reticulation vertices, and that these upper bounds are sharp.
\end{abstract}

\maketitle

\section{Introduction}

Phylogenetic networks have become increasingly more prominent in the literature as they correctly allow the evolution of certain collections of present-day species to be described with reticulation (non-tree-like) events. However, the evolution of a particular gene can generally be described without reticulation events. As a result, analysing the tree-like information in a phylogenetic network has become a common task. Central to this task is that of deciding if a given phylogenetic network $\cN$ infers a given rooted binary phylogenetic tree on the same collection of taxa. In this paper, we show that if $\cN$ is a so-called reticulation-visible network, then there is a polynomial-time algorithm for making this decision. This resolves a problem left open in~\cite{gam15} and~\cite{ier10}. In the rest of the introduction, we formally state this result as well as the other main result which concerns the number of vertices in a reticulation-visible network.

Throughout the paper, $X$ denotes a non-empty finite set. For $|X|\ge 2$, a {\em phylogenetic network on $X$} is a rooted acyclic digraph with no parallel arcs and the following properties:
\begin{enumerate}[(i)]
\item the root has out-degree two;

\item vertices of out-degree zero have in-degree one, and the set of vertices with out-degree zero is $X$; and

\item all other vertices either have in-degree one and out-degree two, or in-degree two and out-degree one.
\end{enumerate}
If $|X|=1$, then $\cN$ consists of the single vertex in $X$. The vertices in $\cN$ of out-degree zero are called {\em leaves}. Furthermore, the vertices of $\cN$ with in-degree two and out-degree one are called {\em reticulations}, while the vertices of in-degree one and out-degree two are called {\em tree vertices}. The arcs directed into a reticulation are {\em reticulation arcs}; all other arcs are called {\em tree arcs}. Note that, what we have called a phylogenetic network is sometimes referred to as a {\em binary} phylogenetic network. A {\em rooted binary phylogenetic $X$-tree} is a phylogenetic network on $X$ with no reticulations.

\begin{figure}
\center
\input{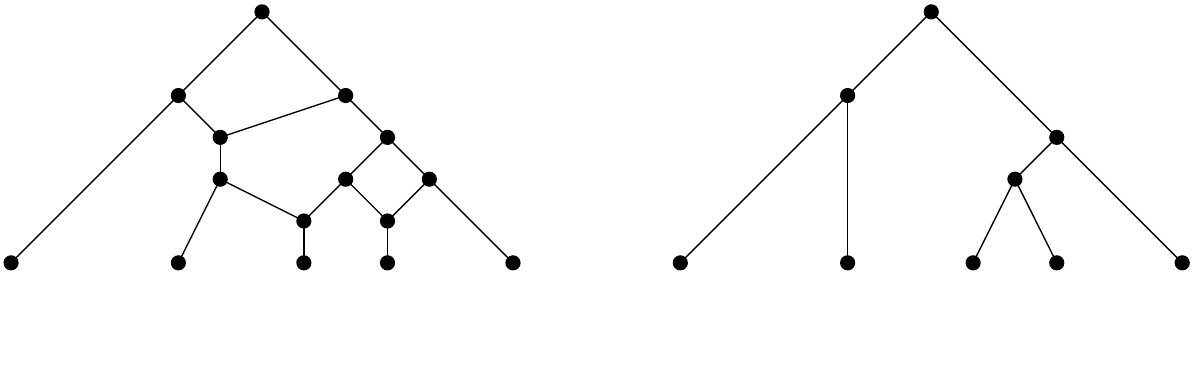_t}
\caption{The phylogenetic network $\cN$ displays the rooted binary phylogenetic tree $\cT$.}
\label{display}
\end{figure}

Let $\cN$ be a phylogenetic network on $X$ and let $\cT$ be a rooted binary phylogenetic $X$-tree. We say that $\cN$ {\em displays} $\cT$ if $\cT$ can be obtained from $\cN$ by deleting arcs and vertices, and contracting degree-two vertices. To illustrate, in Fig.~\ref{display}, the phylogenetic network $\cN$ on $X=\{x_1, x_2, x_3, x_4, x_5\}$ displays the rooted binary phylogenetic $X$-tree $\cT$. The particular problem of interest is the following:

\noindent {\sc Tree Containment} \\
\noindent {\em Instance:} A phylogenetic network $\cN$ on $X$ and a rooted binary phylogenetic $X$-tree $\cT$. \\
\noindent {\em Question:} Does $\cN$ display $\cT$?

\noindent In general, {\sc Tree Containment} is NP-complete~\cite{kan08} even when the instance is highly constrained~\cite{ier10}.

Let $\cN$ be a phylogenetic network on $X$ with root $\rho$. A vertex $v$ in $\cN$ is {\em visible} if there is a leaf $\ell$ in $X$ with the property that every directed path from the root to $\ell$ traverses $v$, in which case, we say {\em $\ell$ verifies the visibility of $v$} (or, more briefly, {\em $\ell$ verifies $v$}). If every reticulation in $\cN$ is visible, then $\cN$ is a {\em reticulation-visible network}. In Fig.~\ref{display}, $\cN$ is a reticulation-visible network. For example, the visibility of the leftmost reticulation is verified by $x_2$. Observe that this reticulation is not verified by $x_3$ as there is a path from the root of $\cN$ to $x_3$ that avoids it. For the reader familiar with tree-child networks, $\cN$ is tree-child network if and only if every vertex in $\cN$ is visible~\cite[Lemma~2]{car09}. Thus tree-child networks are a proper subclass of reticulation-visible networks. It was shown in~\cite{ier10} that there exists a polynomial-time algorithm for {\sc Tree Containment} if $\cN$ is a tree-child network. Here we generalise that result to reticulation-visible networks. The next theorem is the first main result of the paper.

\begin{theorem}
Let $\cN$ be a phylogenetic network on $X$ and let $\cT$ be a rooted binary phylogenetic $X$-tree. Then {\sc Tree Containment} for $\cN$ and $\cT$ can be decided in polynomial time.
\label{decision}
\end{theorem}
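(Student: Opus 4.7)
The plan is to present a polynomial-time algorithm that, given $(\cN,\cT)$, iteratively simplifies the pair until it becomes trivial or is shown not to display. The central operation is a \emph{cherry reduction}: choose a cherry $\{a,b\}$ of $\cT$, locate $a$ and $b$ as leaves of $\cN$, and use the visibility structure to determine which reticulation arcs of $\cN$ are forced, which are forbidden, and ultimately how to produce a smaller equivalent instance.

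First I would study the ancestry of the cherry leaves $a$ and $b$ inside $\cN$. The key observation is that visibility pins down much of this ancestry: if $\cN$ displays $\cT$, then for every reticulation $r$ that is verified by $a$, the root-to-$a$ path in the displayed tree must pass through $r$, so exactly one of the two incoming arcs of $r$ can possibly be used, and likewise for $b$. I would then branch on the local structure around $a$ and $b$. Case (i): if $a$ and $b$ share a parent in $\cN$, delete $b$, suppress the resulting degree-two vertex, and recurse on the trimmed network together with $\cT-b$. Case (ii): if the parent of $a$ (say) is a reticulation $r$ visible via $a$, trace the ancestors of $b$ and check whether either incoming arc of $r$ leads to a consistent merger point for $a$ and $b$; typically this will determine a forced choice, allowing the other incoming arc of $r$ to be deleted. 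Case (iii): in the generic situation, one identifies a sequence of reticulations along the ancestry of $\{a,b\}$ and argues that visibility forces an unambiguous selection of incoming arcs (or that no valid embedding exists, in which case we reject).

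Each reduction step either removes a leaf from $X$ or eliminates at least one reticulation from $\cN$, so only $O(|X|)$ reductions occur. Because the size bound $|V(\cN)|\le 8|X|-7$ (also proved in this paper) holds throughout---provided the reductions preserve reticulation-visibility---each individual reduction can be performed in polynomial time by inspecting paths and ancestor sets in $\cN$, giving an overall polynomial-time algorithm.

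The main obstacle, in my view, is twofold. First, one must prove that some reducible configuration always exists at every nontrivial instance, so that the algorithm actually makes progress; this appears to require a structural lemma showing that a reticulation-visible network must contain a ``lowest'' reticulation whose verifier interacts tractably with a chosen cherry of $\cT$. Second, one must show that the reductions preserve reticulation-visibility, since deleting an incoming arc of one reticulation $r$ and suppressing degree-two vertices can change which leaves verify \emph{other} reticulations. Resolving this---likely by exhibiting, after each reduction, a canonical verifier for each surviving reticulation---is probably the technical heart of the proof.
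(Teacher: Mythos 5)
Your outline follows the same strategy as the paper---pick a cherry $\{a,b\}$ of $\cT$ at maximum depth, use visibility to delete a reticulation arc or a leaf, and recurse, with progress measured by the number of reticulations plus leaves and with the $8|X|-7$ size bound giving the polynomial running time. But the proposal stops exactly where the real work begins. Your case (iii) asserts that ``visibility forces an unambiguous selection of incoming arcs,'' and this is false as stated: after all the locally forced deletions have been made (the paper's easy cases (EC1)--(EC7)), one is left with configurations (the paper's Situations~1--3) in which $v_a$ and $v_b$ are reticulations whose parents all lie below the last common verified ancestor $\rho_{ab}$, and \emph{neither} incoming arc of $v_a$ is individually forced or forbidden by any visibility argument involving $a$ and $b$ alone. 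The paper resolves these by examining the sibling of the cherry in $\cT$ (a leaf $c$ or a second cherry $\{c,d\}$), locating where the ancestors of $c$ attach relative to the four paths from $p_{ab}$ and $q_{ab}$ down to $v_a$ and $v_b$, and then consulting a sixteen-row table of which arc to delete; in some rows no arc can be deleted at all and the correct move is instead to delete the leaf $b$ from both $\cN$ and $\cT$, an operation your sketch only contemplates when $\{a,b\}$ is already a cherry of $\cN$. Without this residual analysis there is no proof that a valid reduction always exists, which you yourself flag as the ``first obstacle'' but do not overcome.

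On your second stated obstacle, note that preservation of reticulation-visibility under arc deletion is immediate: deleting an arc only removes root-to-leaf paths, so any vertex all of whose root-to-$\ell$ paths passed through it still has that property, and a reticulation that becomes degree two is suppressed and no longer needs a verifier. The only delicate operation is deleting the leaf $b$, and there the exclusion of (EC4) guarantees that every reticulation verified by $b$ other than $v_b$ itself is also verified by $a$, so visibility survives. So that obstacle is real but minor; the substantive gap is the missing case analysis described above.
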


It is shown in~\cite{gam15} that a reticulation-visible network on $X$ has at most $4(|X|-1)$ reticulations. The second of the two main results sharpens this result.

\begin{theorem}
Let $\cN$ be a reticulation-visible network $\cN$ on $X$ and let $m=|X|$. Then $\cN$ has at most $8m-7$ vertices in total and at most $3m-3$ reticulations. Moreover, these bounds are sharp for all integers $m\ge 1$.
\label{sharp}
\end{theorem}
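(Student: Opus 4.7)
First I would reduce the vertex bound to the reticulation bound by a degree count. Let $r$ denote the number of reticulations, $t$ the number of non-root tree vertices, and $m=|X|$. Equating the sum of in-degrees to the sum of out-degrees over all vertices of $\cN$ gives
\[
m + t + 2r \;=\; 2 + 2t + r,
\]
so $t = r+m-2$ and $|V(\cN)| = 1 + m + t + r = 2r + 2m - 1$. Hence the bound $|V(\cN)|\le 8m-7$ is equivalent to $r\le 3m-3$, and the case $|X|=1$ is immediate.

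For the reticulation bound I would proceed by induction on $m$, with the trivial base $m=1$. The key preliminary observation is that, for each leaf $\ell$, the set $R_\ell$ of reticulations verified by $\ell$ is totally ordered by ancestry: any two $u,v\in R_\ell$ lie on every root-to-$\ell$ path, so on a fixed such path one must precede the other, and that order is consistent. Moreover, if $u>v$ are consecutive in $R_\ell$, then every path from the root to $v$ passes through $u$ (otherwise one could concatenate with a $v$-to-$\ell$ path and avoid $u$), so both parents of $v$ are descendants of $u$ and the subgraph strictly between $u$ and $v$ is \emph{self-contained} in the sense that it is entered only at $u$ and exited only at $v$.

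The central claim driving the induction is that one can always find a leaf $\ell$ whose local neighbourhood---the parent of $\ell$ together with the bottommost reticulations verified by $\ell$---contains at most three reticulations and can be pruned while preserving reticulation-visibility of what remains. A case analysis on the parent of $\ell$ (tree vertex with leaf sibling, tree vertex with non-leaf sibling, or reticulation), combined with the self-containedness above, should produce such a reducible configuration; applying it yields a reticulation-visible network on $m-1$ leaves with at least $r-3$ reticulations, and the induction closes to give $r\le 3m-3$. The main obstacle is verifying that the remaining network is still reticulation-visible after the pruning: a reticulation elsewhere in $\cN$ could in principle rely on paths passing through the excised region, and the self-containedness of the interval between consecutive elements of $R_\ell$ is precisely what guarantees this does not happen.

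Finally, sharpness would be established by an explicit family of reticulation-visible networks on $m$ leaves with $3m-3$ reticulations, built by iteratively attaching a gadget consisting of one new leaf, two new tree vertices, and three new reticulations arranged in a stair-like pattern so that each new reticulation is verified by some leaf (existing or newly introduced). A routine induction then confirms that the construction is reticulation-visible with exactly $3m-3$ reticulations and $8m-7$ vertices, so both bounds of the theorem are realised.
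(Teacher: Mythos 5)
Your degree-count reduction is correct and is in fact cleaner than what the paper does: counting arcs by in-degree and out-degree gives $n=2m+2r-1$ exactly, so the vertex bound $n\le 8m-7$ is \emph{equivalent} to the reticulation bound $r\le 3m-3$, whereas the paper carries both quantities through the induction in parallel. The sharpness construction is also essentially the paper's (iteratively substituting a copy of the tight two-leaf gadget for a leaf), modulo an arithmetic slip: by your own identity each new leaf must bring four new tree vertices, not two, so that eight vertices are added per step.

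The genuine gap is the ``central claim.'' You assert that a case analysis ``should produce'' a leaf $\ell$ whose local neighbourhood contains at most three reticulations and can be pruned while preserving reticulation-visibility, but this is precisely the hard part of the theorem and it is not supplied. Two concrete problems. First, your self-containedness statement is false as stated: if $u$ is the ancestor of $v$ among two consecutive reticulations verified by $\ell$, it is true that every root-to-$v$ path passes through $u$ (entered only at $u$), but it is not true that the interval is ``exited only at $v$''---a vertex strictly between $u$ and $v$ may have a child leading off to an entirely different leaf. Second, and more importantly, self-containedness addresses the wrong obstacle. When you delete $\ell$ and its parent reticulation $v$, the danger is not that other reticulations relied on paths through the excised region; it is that some reticulation $w$ higher up may have had $\ell$ as its \emph{only} verifying leaf, so that after the deletion nothing verifies $w$. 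The paper's proof spends most of its effort exactly here: it takes $v$ to be a reticulation at maximum distance from the root (so its child is a leaf $\ell$ when there is no cherry), examines the other child $v'$ of the parent $p$ of $v$ on a longest root-to-$v$ path, and shows that either $v'$ is a leaf that inherits the verification of $w$, or $v'$ is another bottommost reticulation with leaf child $\ell'$, in which case the reticulations $w$ and $w'$ verified by $\ell$ and $\ell'$ are ancestor-comparable and one deletes whichever of the pairs $\{v,\ell\}$, $\{v',\ell'\}$ sits below the other's verifier. Without an argument of this kind the induction does not close, so as written the proposal does not prove the reticulation bound.
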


The paper is organised as follows. The next section consists of concepts that will be used in the description of the algorithm that establishes Theorem~\ref{decision}. Called {\sc TreeDetection}, the description of this algorithm and the proof of its correctness is given in Section~\ref{algorithm}. In Section~\ref{running}, we analyse the algorithm's running time and show that {\sc TreeDetection} completes in $O(|X|^3)$ steps. Together Sections~\ref{algorithm} and~\ref{running} constitute the proof of Theorem~\ref{decision}. The last section, Section~\ref{visible}, contains the proof of Theorem~\ref{sharp}. Throughout the paper, notation and terminology follows Semple and Steel~\cite{sem03}.

\section{Preliminaries}
\label{prelim}

%

Let $\cN$ be a phylogenetic network, and let $u$ and $v$ be vertices in $\cN$. If $u$ and $v$ are joined by an arc $(u, v)$, we say $u$ is a \emph{parent} of $v$ and, conversely, $v$ is a \emph{child} of $u$. More generally, if $u$ and $v$ are joined by a directed path from $u$ to $v$, we say $u$ is an \emph{ancestor} of $v$ and, conversely, $v$ is a \emph{descendant} of $u$. Furthermore, if $u$ is neither an ancestor nor descendant of $v$, we say $u$ and $v$ are \emph{non-comparable}. A \emph{backward path} in $\cN$ from $v$ to $u$ is an underlying path $v=w_0, w_1, w_2, \ldots, w_{k-1}, w_k=u$ such that, for all $i\in \{1, 2, \ldots, k\}$, we have that $\cN$ contains the arc $(w_i, w_{i-1})$. Observe that if there is a backward path in $\cN$ from $v$ to $u$, then $u$ is an ancestor of $v$. An \emph{up-down path} $P$ in $\cN$ from $v$ to $u$ is an underlying path $u=w_0, w_1, w_2, \ldots, w_{k-1}, w_k=v$ such that, for some $i\le k-1$, we have that $v=w_0, w_1, w_2, \ldots, w_i$ is a backward path from $v$ to $w_i$ and $w_i, w_{i+1}, \ldots, w_k=u$ is a (directed) path from $w_i$ to $v$. The vertex $w_i$ is the \emph{peak} of $P$. Also, a {\em tree path} in $\cN$ from $u$ to $v$ is a (directed) path such that, except possibly $u$, every vertex on the path is either a tree vertex or a leaf. Lastly, the {\em length} of a (directed) path, a backward path, an up-down path, and a tree path is the number of edges in the path.

Let $\cN$ be a phylogenetic network on $X$. A $2$-element subset $\{x, y\}$ of $X$ is a \emph{cherry} in $\cN$ if there is an up-down path of length two between $x$ and $y$. Equivalently, $\{x, y\}$ is a cherry if the parent of $x$ and the parent of $y$ are the same. For a cherry $\{x, y\}$ in $\cN$, let $\cN'$ be obtained from $\cN$ by deleting $x$ and $y$, and their incident arcs, and labelling their common parent (now itself a leaf) with an element not in $X$. We say that $\cN'$ has been obtained from $\cN$ by {\em reducing the cherry $\{x, y\}$}.

Let $\cN$ be a phylogenetic network on $X$ and let $\cT$ be a rooted binary phylogenetic $X$-tree. Suppose that $\cN$ displays $\cT$. Then there is a subgraph $\cT'$ of $\cN$ that is a subdivision of $\cT$. We say $\cT'$ is an {\em embedding} of $\cT$ in $\cN$. Observe that any embedding of $\cT$ in $\cN$ can be formed by deleting exactly one incoming arc at each reticulation and deleting any resulting degree-one vertex that is not a leaf of $\cN$. We refer to the action of deleting one of the two incoming arcs at a reticulation as \emph{resolving the reticulation}.

\section{The Algorithm}
\label{algorithm}

In this section, we present the algorithm {\sc TreeDetection}. This algorithm takes as input a reticulation-visible network $\cN$ on $X$ and a rooted binary phylogenetic $X$-tree $\cT$ and, as we establish in this section, outputs {\bf Yes} if $\cN$ displays $\cT$ and {\bf No} if $\cN$ does not display $\cT$. We begin with some further preliminaries.

Let $\cN$ be a phylogenetic network on $X$ with root $\rho$. Let $a$ and $b$ be distinct elements in $X$. We define the vertex $v_a$ of $\cN$ to be the reticulation at minimum path length from $\rho$ such that $a$ verifies $v_a$ and no other element of $X$ verifies $v_a$. If there is no such reticulation, we define $v_a$ to be $a$. Furthermore, we define $\rho_{ab}$ to be the vertex at maximum path length from $\rho$ such that both $a$ and $b$ verify $\rho_{ab}$. Note that $v_a$ and $\rho_{ab}$ are both well defined since if $A\subseteq X$ and each element of $A$ verifies vertices $u$ and $v$ of $\cN$, then it must be that either $u$ is an ancestor of $v$ or $v$ is an ancestor of $u$.

Briefly, {\sc TreeDetection} proceeds by picking a cherry $\{a, b\}$ of the targeted rooted binary phylogenetic tree $\cT$ and then considering how the leaves $a$ and $b$ are related in $\cN$. There are various cases to consider, but in most cases we can either declare \textbf{No} directly, or we find an arc of $\cN$ that can be deleted so that the resulting phylogenetic network displays $\cT$ if and only if $\cN$ displays $\cT$. In the remaining cases, we can delete the leaf $b$ from both $\cN$ and $\cT$ so that the resulting phylogenetic network displays the resulting rooted binary phylogenetic tree if and only if $\cN$ displays $\cT$. In any case, if we cannot immediately declare \textbf{No}, we reduce the size of $\cN$ by either a vertex or an arc and, by iterating the procedure, we eventually reduce the problem to the trivial case $|X|=1$.

The algorithm {\sc TreeDetection} consists of a number of subroutines. These subroutines are partitioned into three types. The first type consists of $7$ subroutines which we refer to as the {\em easy cases}. The other two types are referred to as {\em special cases}. The second type consists of $5$ subroutines and the third type consists of $4$ subroutines. We first give a top-level description of {\sc TreeDetection} before detailing the cases and their subroutines.

\begin{algorithm}[H]
 \caption{\textsc{TreeDetection}$(\cN, \cT)$}
\begin{algorithmic}[1]
 \Statex\textbf{Input:} A reticulation-visible network $\cN$ on $X$ and a rooted binary phylogenetic $X$-tree $\cT$.
 \Statex\textbf{Output:} \textbf{Yes} if $\cN$ displays $\cT$, and \textbf{No} otherwise.
      \If{$|X|=1$}
      \Statel{Return {\bf Yes}}
      \Else
      \Statel{Find a cherry $\{a, b\}$ in $\cT$ at maximum path length from the root.}
      \For{$i =1$ to $7$}
      \If{Easy Case $i$ applies}
      \Statel{Execute subroutine {\sc Easy Case $i$}.}
      \Statel{Halt.}
      \EndIf
      \EndFor
      \If{the sibling of the parent of $a$ and $b$ is a single leaf $c$}
      \For{$i =1$ to $4$}
      \If{Special Case $1.i$ applies}
      \Statel{Execute subroutine {\sc Special Case $1.i$}.}
      \Statel{Halt.}
      \EndIf
      \EndFor
	\Statel{Execute subroutine {\sc Special Case $1.5$}.}
      \Else { the sibling of the parent of $a$ and $b$ is the parent of a cherry $\{c, d\}$}
      \For{$i =1$ to $3$}
      \If{Special Case $2.i$ applies}
      \Statel{Execute subroutine {\sc Special Case $2.i$}.}
      \Statel{Halt.}
      \EndIf
      \EndFor
	\Statel{Execute subroutine {\sc Special Case $2.4$}.}
       \EndIf
      \EndIf
 \end{algorithmic}
\end{algorithm}

With the top-level description of the algorithm established, we now turn to the details of the various cases.
If $|X|>1$, let $\{a, b\}$ be a cherry in $\cT$ whose distance from its root is maximised. We then have the following `easy' cases (each one holding only if the preceeding cases do not hold). Subroutines for each easy case as well as their justifications are given in Section~\ref{easycases}.

\noindent\textsc{Easy Cases}
\begin{itemize}
\item[(EC1)] The $2$-element subset $\{a, b\}$ is a cherry in $\cN$.

\item[(EC2)] For some $i\in \{a, b\}$, we have $v_i = i$.

\item[(EC3)] For $\{i, j\}=\{a, b\}$, we have $v_i$ is an ancestor of $v_j$.

\item[(EC4)] For $\{i, j\}=\{a,b\}$, there is an ancestor $w$ of $v_i$ that is verified by $i$ but not by $j$.

\item[(EC5)] For some $i\in \{a,b\}$, there is a descendant $u$ of $v_i$ that is a reticulation not verified by $i$.

\item[(EC6)] For $\{i, j\}=\{a, b\}$, there is a reticulation $u$ that is an ancestor of $v_i$ but not of $j$.

\item[(EC7)] For some $i\in \{i, j\}$, there are two non-comparable reticulations $u$ and $w$ that are descendants of $\rho_{ab}$ and ancestors of $v_i$.
\end{itemize}

\begin{figure}
\center
\input{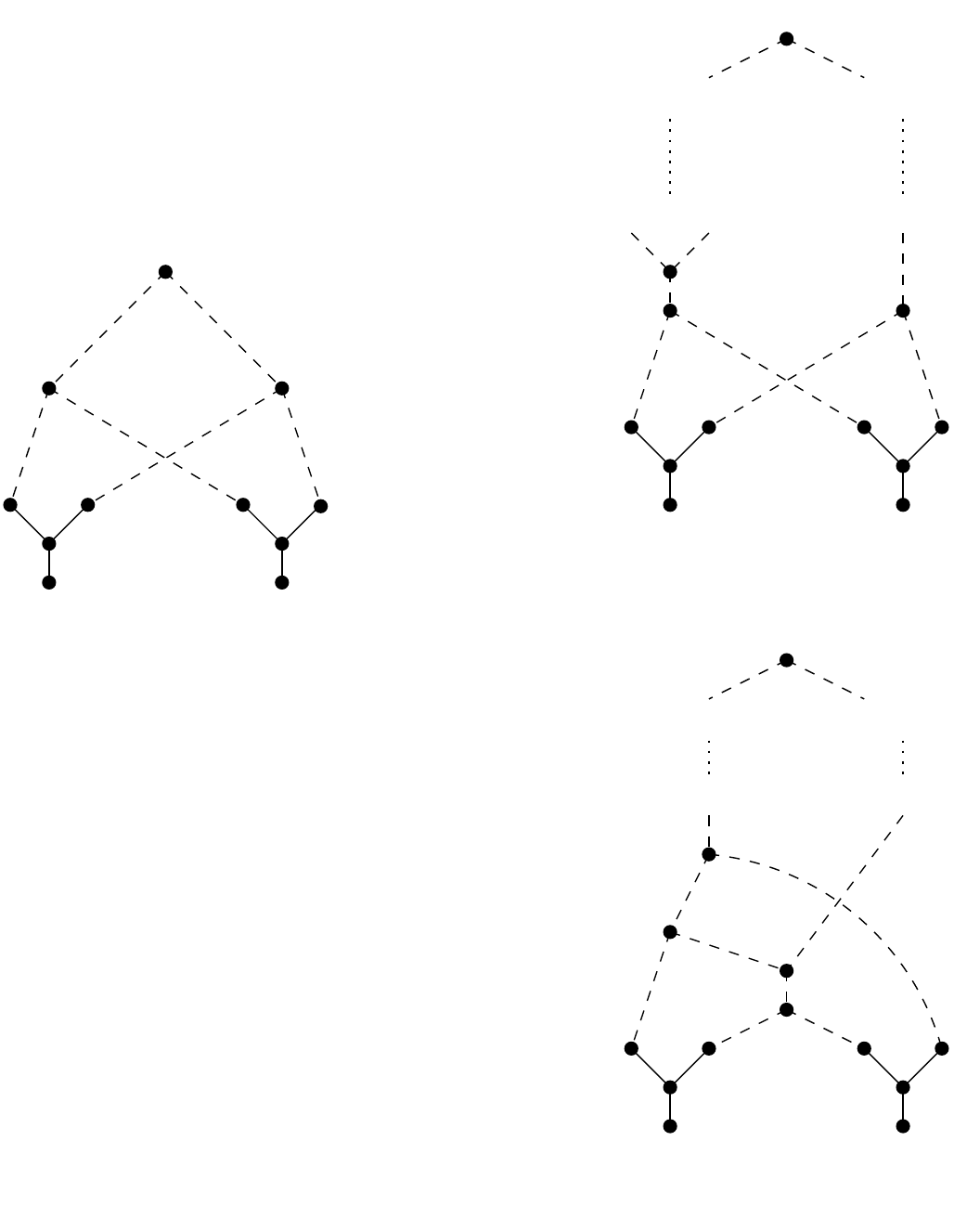_t}
\caption{Situations 1, 2, and 3.}
\label{situations}
\end{figure}

We establish in Lemma~\ref{inspecialcase} below that if we are not in one of (EC1)--(EC7), then we must be in one of the three situations illustrated in Fig.~\ref{situations}. Here, Figs~\ref{situations}(i)--(iii) is a subgraph of the phylogenetic network at the completion of (EC1)--(EC7). More precisely, each of these subgraphs consists of all (directed) paths from $\rho_{ab}$ to either $a$ or $b$. Solid lines correspond to edges. Dashed lines between named vertices correspond to (directed) paths with no reticulations unless one of the named vertices is $u$. Note that $v_a$ and $v_b$ are the parents of $a$ and $b$, respectively. In Situations~2 and~3, the structure of the subgraph immediately `below' $\rho_{ab}$ is not completely determined. Two key vertices in the analysis to follow are those labelled $p_{ab}$ and $ q_{ab}$ in Fig.~\ref{situations}. If there is a reticulation on any path between $\rho_{ab}$ and $v_a$, then there is a unqiue maximal reticulation $u$ by (EC7). In this case we define $p_a$ to be the parent of $v_a$ on the path from $u$ to $v_a$ and $q_a$ to be the other parent of $v_a$. If there is no reticulation, let the children of $\rho_{ab}$ be $p$ and $q$ and define $p_a$ to be the parent of $v_a$ on the path from $p$ to $v_a$, $q_a$ to be the parent of $v_a$ on the path from $q$ to $v_a$. In each case define $p_b,q_b$ likewise. Then $p_{ab}$ is the ancestor of $p_a$ and $p_b$ at maximal distance from $\rho_{ab}$ and  $q_{ab}$ is the ancestor of $q_a$ and $q_b$ at maximal distance from $\rho_{ab}$. (The proof of Lemma~\ref{inspecialcase}  shows that these are well defined.)  

For each situation, the crucial feature we will need to consider is the structure outside of the cherry $\{a, b\}$ in $\cT$. Since $\{a, b\}$ was chosen to be at maximal distance from the root, the sibling of the parent of $a$ and $b$ in $\cT$ is either a leaf $c$ or the parent of leaves $c$ and $d$, where $\{c, d\}$ is a cherry.
First, we consider the case that the sibling is a single leaf $c$. Let $v_c$ be the reticulation in $\cN$ at minimum path length from the root such that $c$ verifies $v_c$ and no other element verifies $c$. If there is no such reticulation, define $v_c$ to be $c$.  Let $\hat v_c$ be the first reticulation on a path from $v_c$ to $c$ (including $v_c$ itself) that is not an ancestor of $a$ or $b$. Note that $\hat v_c=c$ if no such vertex exists. Furthermore, $c$ verifies $\hat v_c$, otherwise some other leaf would verify it, and this leaf would also verify $v_c$. The first of the special cases is detailed below. The corresponding subroutines and their justifications are given in Section~\ref{specialcasealgs}.

\noindent\textsc{Special Case: Single Leaf Sibling}
\begin {itemize}
\item[(SC1.1)] The leaf $c$ is not a descendant of $\rho_{ab}$.

\item[(SC1.2)] There is a descendant $u$ of $\hat v_c$ that is a reticulation and not verified by $c$.

\item[(SC1.3)] There is an ancestor $u$ of $\hat v_c$ that is a reticulation and not an ancestor of $a$ or $b$.

\item[(SC1.4)] The leaf $c$ does not verify $\rho_{ab}$.
\end{itemize}

The fifth subroutine deals with the case when none of (SC1.1)--(SC1.4) hold, in which case, $\hat{v}_c\neq c$. Denote the parents of $\hat v_c$ as $p_c$ and $q_c$. Let $p'_c$ be the unique  vertex at maximal distance from $\rho_{ab}$ that is an ancestor of $p_c$ and an ancestor of either $a$ or $b$. Likewise, let $q'_c$ be the unique vertex at maximal distance from $\rho_{ab}$ that is an ancestor of $q_c$ and an ancestor of either $a$ or $b$. For each of $p'_c$ and $q'_c$ either it is an ancestor of $p_{ab}$ or an ancestor of $q_{ab}$, or it lies on one of the following (directed) paths: $p_{ab}$ to $p_a$, call it $P_1$; $p_{ab}$ to $p_b$, call it $P_2$; $q_{ab}$ to $q_a$, call it $P_3$;  and $q_{ab}$ to $q_b$, call it $P_4$. With these definitions, we identify an arc to delete from $\cN$, or a vertex to delete from both $\cT$ and $\cN$. The appropriate deletions are given in Table~\ref{singlectable}, while justification for these deletions is given in Section~\ref{specialcasealgs}. In the table, we denote that $p'_c$ (respectively, $q'_c$) is an ancestor of a vertex $u$ by $>u$.

\begin{table}[H]
\caption{Table showing the actions to perform on $\cN$ and $\cT$ to create $\cN'$ and $\cT'$ depending on the locations of $p'_c$ and $q'_c$ in each of Situations 1--3 when the sibling of the parent of $a$ and $b$ is a single leaf $c$. If the entry corresponding to the subcase we are in contains a single arc $e$, then $\cN'=\cN\backslash e$ and $\cT'=\cT$. If the entry contains the leaf $b$, then $\cN'=\cN\backslash b$ and $\cT'=\cT\backslash b$.}
\begin{center}
\begin{tabular}{|c|c|c|c|c|} \hline
$p'_c$ & $q'_c$ & Sit.\ 1 & Sit.\ 2 & Sit.\ 3 \\
\hline
$P_1$ or $P_2$ & $P_1$ or $P_2$ & $(p_a, v_a)$ & $(p_a, v_a)$ & $(p_a, v_a)$ \\
$P_1$ or $P_2$ & $P_3$ or $P_4$ & $(p_c, \hat v_c)$ & $(q_c, \hat v_c)$ & $(q_c, \hat v_c)$ \\
$P_1$ or $P_2$ & $>p_{ab}$ & $(p_c, \hat v_c)$ & $(p_c, \hat v_c)$ & $(p_c, \hat v_c)$ \\
$P_1$ or $P_2$ & $>q_{ab}$ & $(p_a, v_a)$ & $(p_a, v_a)$ & $(p_a, v_a)$ \\
$P_3$ or $P_4$ & $P_1$ or $P_2$ & $(p_c, \hat v_c)$ & $(p_c, \hat v_c)$ & $(p_c, \hat v_c)$ \\
$P_3$ or $P_4$ & $P_3$ or $P_4$ & $(q_a, v_a)$ & $(q_a, v_a)$ & $(q_a, v_a)$ \\
$P_3$ or $P_4$ & $>p_{ab}$ & $(q_a, v_a)$ & $(p_c, \hat v_c)$ & $(p_c, \hat v_c)$ \\
$P_3$ or $P_4$ & $>q_{ab}$ & $(p_c, \hat v_c)$ & $(p_c, \hat v_c)$ & $(p_c, \hat v_c)$ \\
$>p_{ab}$ & $P_1$ or $P_2$ & $(q_c, \hat v_c)$ & $(q_c, \hat v_c)$ & $(q_c, \hat v_c)$ \\
$>p_{ab}$& $P_3$ or $P_4$ & $(q_a, v_a)$ &$(q_c, \hat v_c)$ & $(q_c, \hat v_c)$ \\
$>p_{ab}$ & $>p_{ab}$ & $(q_a, v_a)$ & $(q_a, v_a)$ & $(q_a, v_a)$ \\
$>p_{ab}$ & $>q_{ab}$ & $b$ & $b$ & $b$ \\
$>q_{ab}$ & $P_1$ or $P_2$ & $(p_a, v_a)$ & $(p_a, v_a)$ & $(p_a, v_a)$ \\
$>q_{ab}$& $P_3$ or $P_4$ & $(q_c, \hat v_c)$ & $(q_c, \hat v_c)$ & $(q_c, \hat v_c)$ \\
$>q_{ab}$ & $>p_{ab}$ & $b$ & $b$ & $b$ \\
$>q_{ab}$ & $>q_{ab}$ & $(p_a, v_a)$ & $(p_a, v_a)$ & $(p_a, v_a)$ \\ \hline
\end{tabular}
\end{center}
\label{singlectable}
\end{table}

Finally, we consider the case that the sibling of the parent of $a$ and $b$ in $\cT$ is the parent of leaves $c$ and $d$, where $\{c, d\}$ is a cherry. Since $\{c, d\}$ is also a cherry at maximum distance from the root of $\cT$, we first check whether any of (EC1)--(EC7) apply when considering $\{c, d\}$ instead of $\{a, b\}$. If so, we make the appropriate reduction of $\cN$; otherwise, we may assume that the configuration of $\{c, d\}$ in $\cN$ is analogous to one of Situations~1--3. In particular, $v_c$ is the parent of $c$ and $v_d$ is the parent of $d$. Furthermore, without loss of generality, we may assume that $\rho_{ab}$ is an ancestor of (or equal to) $\rho_{cd}$ or that they are non-comparable (if this is not the case, a simple relabelling will resolve the issue). The second special case is detailed below. The corresponding subroutines and their justifications are given in Section~\ref{specialcasealgs}.

\noindent\textsc{Special Case: Cherry Sibling}
\begin{itemize}
\item[(SC2.1)] The vertices $\rho_{cd}$ and $\rho_{ab}$ are non-comparable.

\item[(SC2.2)] The vertex $\rho_{cd}$ is not on one of the paths from $\rho_{ab}$ to either $v_a$ or $v_b$.

\item[(SC2.3)] There is a reticulation $u$ that is an ancestor of $p_{cd}$ or $q_{cd}$ but it is not an ancestor of $a$ or $b$.
\end{itemize}

Now we consider the locations of $p_{cd}$ and $q_{cd}$ relative to $a$ and $b$. Since $\rho_{cd}$ is a descendant of $\rho_{ab}$ (or $\rho_{cd}=\rho_{ab}$), each of $p_{cd}$ and $q_{cd}$ must either lie on a path from $\rho_{ab}$ to either $v_a$ or $v_b$, or have a unique ancestor on such a path a maximal distance from $\rho_{ab}$. Let $p'_{cd}$ be this unique ancestor of $p_{cd}$, or equal to $p_{cd}$ if it lies on such a path itself, and let $q'_{cd}$ be the analogous vertex for $q_{cd}$. We have a similar table to that of the previous special case, but this time we must distinguish between subcases where $p'_{cd}$ is or is not equal to $p_{cd}$, and $q'_{cd}$ is or is not equal to $q_{cd}$. We use following definitions:
\begin{enumerate}[(i)]
\item Let $p'_{cd}$ (respectively, $q'_{cd}$) be of {\em Type 1} if it is a descendant of $p_{ab}$ or an ancestor of $p_{ab}$ and $p'_{cd}=p_{cd}$ (respectively, $q'_{cd}= q_{cd}$).

\item Let $p'_{cd}$ (respectively, $q'_{cd}$) be of {\em Type 2} if it is a descendant of $q_{ab}$ or an ancestor of $q_{ab}$ and $p'_{cd}=p_{cd}$ (respectively, $q'_{cd}= q_{cd}$).

\item Let $p'_{cd}$ (respectively, $q'_{cd}$) be of {\em Type 3} if it is an ancestor of $p_{ab}$ and $p'_{cd}\neq p_{cd}$ (respectively, $q'_{cd}\neq q_{cd}$).

\item Let $p'_{cd}$ (respectively, $q'_{cd}$) be of {\em Type 4} if it is an ancestor of $q_{ab}$ and $p'_{cd}\neq p_{cd}$ (respectively, $q'_{cd}\neq q_{cd}$).
\end{enumerate}

\begin{table}[H]
\caption{Table showing the actions to perform on $\cN$ and $\cT$ to form $\cN'$ and $\cT'$, respectively, depending on the locations of $p_{cd}$ and $q_{cd}$ in each of Situations~1--3 in the case that the sibling of the parent of $a$ and $b$ realises a cherry $\{c, d\}$. If the entry corresponding to the subcase we are in contains a single arc $e$, then $\cN'=\cN\backslash e$ and $\cT'=\cT$. If the entry contains the leaf $b$, then $\cN'=\cN\backslash b$ and $\cT'=\cT\backslash b$.}
\begin{center}
\begin{tabular}{|c|c|c|c|c|} \hline
$p'_{cd}$ & $q'_{cd}$ & Sit.\ 1 & Sit.\ 2 & Sit.\ 3 \\
\hline
Type 1 & Type 1 & $(p_a, v_a)$ & $(p_a, v_a)$ & $(p_a, v_a)$ \\
Type 1 & Type 2 & $(p_c, v_c)$ & $(q_c, v_c)$ & $(q_c, v_c)$ \\
Type 1 &  Type 3 & $(p_c, v_c)$ & $(p_c, v_c)$ & $(p_c, v_c)$ \\
Type 1 &  Type 4 & $(p_a, v_a)$ & $(p_a, v_a)$ & $(p_a, v_a)$ \\
 Type 2 & Type 1 & $(p_c, v_c)$ & $(p_c, v_c)$ & $(p_c, v_c)$ \\
 Type 2 &  Type 2 & $(q_a, v_a)$ & $(q_a, v_a)$ & $(q_a, v_a)$ \\
 Type 2 &  Type 3 & $(q_a, v_a)$ & $(p_c, v_c)$ & $(p_c, v_c)$ \\
 Type 2 &  Type 4 & $(p_c, v_c)$ & $(p_c, v_c)$ & $(p_c, v_c)$ \\
 Type 3 &  Type 1 & $(q_c, v_c)$ & $(q_c, v_c)$ & $(q_c, v_c)$ \\
 Type 3 &  Type 2 & $(q_a, v_a)$ & $(q_c, v_c)$ & $(q_c, v_c)$ \\
 Type 3 &  Type 3 & $(q_a, v_a)$ & $(q_a, v_a)$ & $(q_a, v_a)$ \\
 Type 3 & Type 4 & $b$ & $b$ & $b$ \\
 Type 4 &  Type 1 & $(p_a, v_a)$ & $(p_a, v_a)$ & $(p_a, v_a)$ \\
 Type 4&  Type 2 & $(q_c, v_c)$ & $(q_c, v_c)$ & $(q_c, v_c)$ \\
 Type 4 &  Type 3 & $b$ & $b$ & $b$ \\
 Type 4 &  Type 4 & $(p_a, v_a)$ & $(p_a, v_a)$ & $(p_a, v_a)$ \\ \hline
\end{tabular}
\end{center}
\label{cherryctable}
\end{table}

\subsection{Easy Cases}
\label{easycases}
\setcounter{algorithm}{0}

Here we present the subroutines for each of (EC1)--(EC7) with justification following each subroutine. Throughout the presentation of these subroutines, as well as those presented for the two special cases, whenever we delete an arc or a leaf from $\cN$ or $\cT$, we additionally contract any resulting degree-two vertex.

\begin{algorithm}[H]
\floatname{algorithm}{Subroutine}
 \caption{\textsc{Easy Case 1}}
\begin{algorithmic}[1]
  \Statel{Delete $b$ in $\cN$ and $\cT$ to obtain $\cN'$ and $\cT'$, respectively}
  \Statel{Return \textsc{TreeDetection$(\cN', \cT')$}.}
 \end{algorithmic}
\end{algorithm}
Suppose $\{a, b\}$ is a cherry in $\cN$. Then any embedding of $\cT'$ in $\cN'$ can be extended to an embedding of $\cT$ in $\cN$ by appending $b$ to the arc incident with $a$ in both $\cN'$ and $\cT'$, and including the new edge in the embedding. Conversely, any embedding of $\cT$ in $\cN$ gives rise to an embedding of $\cT'$ in $\cN'$ by deleting $b$, its incident arc, and contracting the resulting degree-two vertex.

\begin{algorithm}[H]
\floatname{algorithm}{Subroutine}
 \caption{\textsc{Easy Case 2}}
\begin{algorithmic}[1]
 \Statex\textbf{Requirement:} Easy Cases 1 does not apply, and $v_i=i$ for some $i\in \{a, b\}$.
     \Statel{Let $p_i$ denote the parent of $i$.}
      \If{there is a tree path from $p_i$ to a leaf $\ell\not\in \{a, b\}$,} \Statel{Return \textbf{No}.}\label{AlgStep5a}
      \Else { there is a tree path from $p_i$ to a parent $p_u$ of a reticulation $u$.}\label{AlgStep5b}
       \If{$u\neq v_j$,}
        \Statel{Construct $\cN'$ from $\cN$ by deleting the arc $(p_u, u)$.}
       \Else { $u=v_j$}
        \Statel{Construct $\cN'$ from $\cN$ by deleting the incoming arc to $u$ that is not $(p_u, u)$.}
       \EndIf
               \Statel{Return \textsc{TreeDetection$(\cN', \cT)$.}}\label{AlgStep5bii}
                \EndIf
 \end{algorithmic}
\end{algorithm}
%

If there is a tree path from $p_i$ to a leaf $\ell\not\in \{a, b\}$, then every rooted binary phylogenetic $X$-tree  embedded in $\cN$ must display the rooted triple $i\ell|j$. Since $\{i, j\}=\{a, b\}$ is a cherry in $\cT$, no embedding of $\cT$ in $\cN$ can exist. Thus returning {\bf No} in Line~3 is correct.

So assume that there is no such tree path from $p_i$. Then there is a tree path from $p_i$ to a parent $p_u$ of a reticulation $u$. Assume $u\neq v_j$. Observe that $u$ is not a descendant of $v_j$, otherwise $i$ would also verify $v_j$ contradicting the definition of $v_j$. Thus $u$ is verified by some leaf $\ell\not\in \{a, b\}$. Note that we can choose $\ell\neq j$ since $u\neq v_j$ and $u$ is not a descendant of $v_j$. Now an embedding of a rooted binary phylogenetic $X$-tree in $\cN$ that uses arc $(p_u, u)$ would display the rooted triple $i\ell|j$ if $v_j$ is not a descendant of $u$, and $j\ell|i$ if $v_j$ is a descendant of $u$.

On the other hand, if $u=v_j$, then it is easily seen that if there is an embedding of $\cT$ that does not use arc $(p_u, u)$, then there is also an embedding of $\cT$ that uses arc $(p_u, u)$. Thus in both cases there is an embedding of $\cT$ in $\cN$ if and only if there is an embedding of $\cT$ in $\cN'$.

The justifications for the next three subroutines are similar to that for Subroutine 2 and are omitted.

\begin{algorithm}[H]
\floatname{algorithm}{Subroutine}
 \caption{\textsc{Easy Case 3}}
\begin{algorithmic}[1]
 \Statex\textbf{Requirement:} Easy Cases 1--2 do not apply, and $v_i$ is an ancestor of $v_j$ for $\{i, j\}= \{a, b\}$.
       \Statel{Let $p_j$ be a parent of $v_j$ that is a descendant of $v_i$, and let $u$ be the first reticulation reached on a backward path from $p_j$ to $v_i$.}
        \If{$u$ is not an ancestor of $i$,}
         \Statel{Construct $\cN'$ from $\cN$ by deleting the arc $(p_j, v_j)$.}
        \Else { $u$ is an ancestor of $i$}
         \Statel{Construct $\cN'$ from $\cN$ by deleting the incoming arc to $v_j$ that is not $(p_j, v_j)$.}
        \EndIf
         \Statel{Return \textsc{TreeDetection$(\cN', \cT)$}.}
 \end{algorithmic}
\end{algorithm}

\begin{algorithm}[H]
\floatname{algorithm}{Subroutine}
 \caption{\textsc{Easy Case 4}}
\begin{algorithmic}[1]
 \Statex\textbf{Requirement:} Easy Cases 1--3 do not apply and, for some $i\in \{a, b\}$, there is an ancestor $w$ of $v_i$ which is verified by $i$ but not by $j$.
        \If{$w$ is not an ancestor of $j$,}
         \Statel{Construct $\cN'$ from $\cN$ by deleting either incoming arc to $v_i$.}
        \ElsIf{there is a reticulation on a path $P$ from $w$ to $v_j$ that is not an ancestor of $i$,}
         \Statel{Construct $\cN'$ from $\cN$ by deleting the incoming arc to $v_j$ that is on $P$.}
        \Else
         \Statel{Construct $\cN'$ from $\cN$ by deleting the incoming arc to $v_j$ that is not on a backward path to $w$ from $v_j$.}
        \EndIf
        \Statel{Return \textsc{TreeDetection$(\cN', \cT)$}.}
 \end{algorithmic}
\end{algorithm}

\begin{algorithm}[H]
\floatname{algorithm}{Subroutine}
 \caption{\textsc{Easy Case 5}}
\begin{algorithmic}[1]
 \Statex\textbf{Requirement:} Easy Cases 1--4 do not apply and, for some $i\in \{a, b\}$, there is a path $P$ from $v_i$ to a reticulation not verified by $i$.
        \Statel{Let $u$ be the first reticulation on $P$ not verified by $i$ and let $p_u$ be the parent of $u$ on $P$.}
        \Statel{Construct $\cN'$ from $\cN$ by deleting the arc $(p_u, u)$.}
        \Statel{Return \textsc{TreeDetection$(\cN', \cT)$}.}
 \end{algorithmic}
\end{algorithm}

\begin{algorithm}[H]
\floatname{algorithm}{Subroutine}
 \caption{\textsc{Easy Case 6}}
\begin{algorithmic}[1]
 \Statex\textbf{Requirement:} Easy Cases 1--5 do not apply and, for $\{i, j\}=\{a, b\}$, there is a reticulation $u$ that is an ancestor of $v_i$ but not an ancestor of $j$.
        \Statel{Let $p_i$ be a parent of $v_i$ that is a descendant of $u$.}
        \Statel{Construct $\cN'$ from $\cN$ by deleting the arc $(p_i, v_i)$.}
        \Statel{Return \textsc{TreeDetection$(\cN', \cT)$}.}
 \end{algorithmic}
\end{algorithm}
Since $u$ is an ancestor of $v_i$ it must be verified by some leaf $\ell\neq i$. But $u$ is not an ancestor of $j$, so $\ell\neq j$. If an embedding of $\cT$ uses the arc $(p_i,v_i)$, then it must display the rooted triple $i\ell|j$; a contradiction. Thus there is an embedding of $\cT$ in $\cN$ if and only if there is an embedding of $\cT$ in $\cN'$.

\begin{algorithm}[H]
\floatname{algorithm}{Subroutine}
 \caption{\textsc{Easy Case 7}}
\begin{algorithmic}[1]
 \Statex\textbf{Requirement:} Easy Cases 1--6 do not apply, and there are two non-comparable reticulations $u$ and $w$ that are descendants of $\rho_{ab}$ and, for some $i\in \{a, b\}$, ancestors of $v_i$.
        \Statel{Let $p_i$ be a parent of $v_i$ that is a descendant of $u$, and let $q_i$ be the other parent of $v_i$.}
        \Statel{Let $\ell_u$ be a leaf verifying $u$, and let $\ell_w$ be a leaf verifying $w$.}
        \If{there is a reticulation $u'$ that is on a path from $u$ to $v_i$ at minimal distance from $u$}
        \Statel{Let $\ell_u'\not\in \{\ell_u,\ell_w,a, b\}$ be a leaf verifying $u'$.}
                \If{$\cT$ displays the rooted triple $\ell_u'\ell_u|\ell_w$}
        \Statel{Construct $\cN'$ from $\cN$ by deleting the incoming arc  to $u'$ on the path from $w$.}
        \Else
        \Statel{Construct $\cN'$ from $\cN$ by deleting the incoming arc  to $u'$ on the path from $u$.}
        \EndIf
        \Else
        \If{$\cT$ displays the rooted triple $i\ell_u|\ell_w$}
        \Statel{Construct $\cN'$ from $\cN$ by deleting the arc $(q_i, v_i)$.}
        \Else
        \Statel{Construct $\cN'$ from $\cN$ by deleting the arc $(p_i, v_i)$.}
        \EndIf
        \EndIf
                \Statel{Return \textsc{TreeDetection$(\cN', \cT)$}.}
 \end{algorithmic}
\end{algorithm}
Without loss of generality, we may assume $u$ and $w$ are at maximal distance from $\rho_{ab}$, that is, there is no reticulation $u'$ that is a descendant of $u$ such that $u'$ and $w$ satisfy the conditions of the requirement and, likewise, no reticulation $w'$ that is a descendant of $w$ such that $u$ and $w'$ satisfy the conditions of the requirement. Since $u$ and $w$ are descendants of $\rho_{ab}$, neither is verified by both $a$ and $b$ and so, by (EC4), neither is verified by $a$ or by $b$. Thus there are leaves $\ell_u,\ell_w\not\in \{a, b\}$ such that $\ell_u$ verifies $u$ and $\ell_w$ verifies $w$. Since $u$ and $w$ are non-comparable, $\ell_u\neq \ell_w$ as not all paths to a single leaf can go through two non-comparable vertices. By (EC6), each of $u$ and $w$ is an ancestor of both $a$ and $b$. 

Suppose first that there is a reticulation $u'$ that is on a path from $u$ to $v_i$. Take $u'$ to be such a reticulation at shortest distance to $u$. By our assumption, $u'$ cannot be non-comparable with $w$, so it must be a descendant of both $w$ and $u$. Furthermore, $u'$ must be verified by some $\ell_{u'}\not\in \{\ell_u,\ell_w,a, b\}$. Since $\cT$ must display at most one of $\ell_u'\ell_u|\ell_w$ and $\ell_u'\ell_w|\ell_u$; we delete the appropriate reticulation arc incident with $u'$ so that there is an embedding of $\cT$ in $\cN$ if and only if there is an embedding of $\cT$ in $\cN'$.

By symmetry, we may now assume that there is no reticulation that is on a path from $u$ to $v_i$ or from $w$ to $v_i$. Let $p_i$ be the parent of $v_i$ that is a descendant of $u$, and let $q_i$ be the parent of $v_i$ that is a descendant of $w$. Note that these are well defined by our latest assumption and, also, $p_a\neq q_a$ since $u$ and $w$ are non-comparable. If an embedding of $\cT$ uses arc $(p_a,v_a)$, it must display the rooted triple $a\ell_u|\ell_w$, and if it uses arc $(q_a,v_a)$, it must display the rooted triple $a\ell_w|\ell_u$. At most one of these rooted triples is displayed by $\cT$. Thus, with $\cN'$ as constructed, there is an embedding of $\cT$ in $\cN$ if and only if there is an embedding of $\cT$ in $\cN'$.

\subsection{Special Cases}\label{specialcasealgs}
We first show that if none of (EC1)--(EC7) hold, then we are indeed in one of Situations~1, 2, and 3. Thereafter, we present the subroutines and their justifications for the special cases.

\begin{lemma}\label{inspecialcase}
Let $|X|\ge 2$, and let $\cN$ be a phylogenetic network on $X$ and let $\cT$ be a rooted binary phylogenetic $X$-tree. Let $\{a, b\}$ be a cherry in $\cT$ whose distance from its root is maximised. If none of {\rm (EC1)--(EC7)} applies, then one of Situations~$1$, $2$, and $3$ applies.
\end{lemma}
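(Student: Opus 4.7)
The plan is to show that once (EC1)--(EC7) all fail, the subgraph of $\cN$ consisting of all directed paths from $\rho_{ab}$ down to $\{a,b\}$ is forced to be one of the three configurations in Figure~\ref{situations}. I would proceed bottom-up: first pinning down $v_a$ and $v_b$, then $\rho_{ab}$, and finally the intermediate reticulations.

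The first step is to show that $v_a$ is the parent of $a$ (and symmetrically $v_b$ is the parent of $b$). Since (EC2) fails, $v_a$ is a reticulation, so it has a unique child $w$. If $w$ were a reticulation, its second parent $p$ would have to be a descendant of $v_a$ (otherwise a root-to-$a$ path through $p$ would avoid $v_a$, contradicting that $a$ verifies $v_a$), but this forces a directed cycle through $p$ and $w$, contradicting acyclicity. If $w$ is a leaf, then $v_a$ is its only parent, so $w$ verifies $v_a$, forcing $w=a$. Finally, if $w$ is a tree vertex with children $w_1, w_2$, then up to relabelling $a$ lies only below $w_1$ and $w_2$'s sub-dag contains some leaf $\ell \neq a$; since $\ell$ does not verify $v_a$, there is a root-to-$\ell$ path avoiding $v_a$, and the first vertex on this path lying in the descendant-cone of $v_a$ is a reticulation $u$ with one parent outside the cone. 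By the failure of (EC5) this $u$ is verified by $a$, so concatenating the external root-to-$u$ path with a path from $u$ down to $a$ yields a root-to-$a$ path avoiding $v_a$, again a contradiction. Hence $w=a$.

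With $v_a, v_b$ identified as the parents of $a, b$, the claim $v_a \neq v_b$ follows from the uniqueness clause in their definitions, and non-comparability of $v_a$ and $v_b$ follows directly from not-(EC3). Both $a$ and $b$ verify the root, so the set of vertices verified by both $a$ and $b$ is non-empty and totally ordered (by the observation in the preliminaries), whence $\rho_{ab}$ is well-defined. A short argument then shows that $v_a$ cannot be a strict ancestor of $\rho_{ab}$ --- otherwise every root-to-$\rho_{ab}$ path, and hence every root-to-$b$ path, would pass through $v_a$, making $b$ verify $v_a$ --- so $\rho_{ab}$ is a strict ancestor of both $v_a$ and $v_b$.

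Finally I analyse the reticulations strictly between $\rho_{ab}$ and $v_a$ (respectively, $v_b$). By not-(EC7) each of these sets is a chain, with a unique top element $u_a$ (respectively, $u_b$) when non-empty. Not-(EC6) implies that every reticulation ancestor of $v_a$ is an ancestor of $b$, and since $a$ has only $v_a$ as its parent this ancestor of $b$ is also an ancestor of $v_b$; by symmetry the analogous statement holds for $u_b$, forcing $u_a=u_b$ whenever at least one of them exists. If neither exists, the definitions of $p_a, q_a, p_b, q_b$ via the two children of $\rho_{ab}$ place us in Situation~1; if $u:=u_a=u_b$ exists, the analysis of the relative positions of $p_{ab}, q_{ab}$ and $u$ places us in Situation~2 or Situation~3, and the failures of (EC4)--(EC7) then rule out any extraneous structure on the connecting paths. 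The main obstacle is the first step --- pinning down $v_a$ as the parent of $a$ --- where the tree-vertex subcase requires the careful reticulation-tracing argument combining acyclicity with the failure of (EC5); once that is settled, the rest is mechanical case analysis driven by not-(EC6) and not-(EC7).
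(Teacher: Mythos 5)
Your overall skeleton is the same as the paper's (first show $v_a$ and $v_b$ are the parents of $a$ and $b$, then use not-(EC6)/(EC7) to collapse the reticulations between $\rho_{ab}$ and $v_a,v_b$ to a single maximal one $u$, then read off the three situations), but the tree-vertex subcase of your first step has a genuine gap. You assert that, up to relabelling, $a$ lies only below $w_1$, so that $w_2$ has a leaf descendant $\ell\neq a$, and your contradiction is built entirely on that $\ell$. The assertion is unjustified, and in fact no such $\ell$ exists: by not-(EC5) every reticulation below $w$ is verified by $a$, and any such reticulation with a parent outside the descendant-cone of $v_a$ would give a root-to-$a$ path avoiding $v_a$; hence every reticulation below $w$ has both parents inside the cone, the only entrance to the cone is $v_a$ itself, and so every leaf below $w$ verifies $v_a$ and must equal $a$. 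Thus precisely in the case you need to close, the leaf your argument traces through does not exist, and the contradiction must come from a different idea --- e.g.\ the paper's: follow a maximal tree path from the parent of $a$ through its other child; it ends either at a parent of a reticulation not verified by $a$ (contradicting not-(EC5)) or at a leaf $\ell\neq a$ that would then verify $v_a$ (contradicting the definition of $v_a$).

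The second gap is the endgame. Saying that the relative positions of $p_{ab}$, $q_{ab}$ and $u$ ``place us in Situation~2 or~3'' and that the failures of (EC4)--(EC7) ``rule out any extraneous structure'' restates the lemma rather than proving it. The substantive points skipped are: that $\rho_{ab}$ is a tree vertex both of whose children have directed paths to both $a$ and $b$ (this is exactly where not-(EC4) is needed, and (EC4) never actually enters your argument); that the paths from $u$ down to $v_a$ and $v_b$ are unique and reticulation-free (maximality of $u$ together with not-(EC7)); and, most importantly, that $p_{ab}$ and $q_{ab}$ are well defined --- the paper explicitly proves that two distinct maximal common ancestors of $q_a$ and $q_b$ would force two non-comparable reticulations below $\rho_{ab}$ that are ancestors of $v_a$, contradicting not-(EC7). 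The parts of your outline that are sound (the child of $v_a$ cannot be a reticulation, the leaf case, $v_a\neq v_b$ and their non-comparability from not-(EC3), and $u_a=u_b$ from not-(EC6)) match the paper's reasoning, but the two gaps above are where the real content of the lemma lies.
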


\begin{proof}
Let $i\in \{a, b\}$. Suppose first that the parent of $i$ is a tree vertex $v$. Then there is a tree-path from $v$ that reaches either a parent of a reticulation $u$ not verified by $i$, or a leaf $\ell\neq i$. By (EC2), $v_i\neq i$, so EC5 excludes the tree-path reaching a parent of a reticulation $u$. But then $\ell$ verifies every vertex that $i$ verifies (except $i$ itself), contradicting the definition of $v_i$.

Now suppose that the parent of $i$ is a reticulation $w\neq v_i$. Let the parents of $w$ be $p_w$ and $q_w$, and note that $p_w$ and $q_w$ are both tree vertices since parents of reticulations are not visible. The vertex $v_i$ must be an ancestor of both $p_w$ and $q_w$. By (EC5), $p_w$ cannot have a path to a reticulation other than $w$. But then $p_w$ must have a tree path to a leaf $\ell$ that verifies $v_i$; a contradiction. It now follows that $v_i$ is the parent of $i$ for all $i\in\{a, b\}$.

The vertex $\rho_{ab}$ cannot be a reticulation, as its child would also be verified by $a$ and $b$. Thus $\rho_{ab}$ has two children $p$ and $q$, neither of which is verified by both $a$ and $b$. Furthermore, by (EC4), neither $p$ nor $q$ is verified by exactly one of $a$ and $b$. Hence there is a (directed) path from $p$ to each of $a$ and $b$, and also from $q$ to each of $a$ and $b$.

Except for $v_a$ and $v_b$, suppose that there are no reticulations on any path between $\rho_{ab}$ and $v_a$ and between $\rho_{ab}$ and $v_b$. Then the paths from $p$ to each of $a$ and $b$ are unique, and the paths from $q$ to each of $a$ and $b$ are unique. Denote the parent of $v_a$ on the path from $p$ to $a$ by $p_a$, and the parent of $v_a$ on the path from $q$ to $a$ by $q_a$. Similarly, denote by $p_b$ and $q_b$ the parents of $v_b$. Let $p_{ab}$ be the last vertex on the path from $p$ to $p_a$ that is an ancestor of $b$, and let $q_{ab}$ be the last vertex on the path from $q_{ab}$ to $q_a$ that is an ancestor of $b$. (Possibly $p_{ab}=p$ or $q_{ab}=q$.) Thus, if, apart from $v_a$ and $v_b$, there are no reticulations on any path between $\rho_{ab}$ and $a$, and between $\rho_{ab}$ and $b$, then we are in Situation~1.

Now suppose that, in addition to $v_a$ and $v_b$, there is a reticulation on a path between $\rho_{ab}$ and $v_a$ or between $\rho_{ab}$ and $v_b$. Let $u$ be such a reticulation at maximal distance from $\rho_{ab}$. By (EC6), $u$ is an ancestor of both $a$ and $b$. By (EC7), every other reticulation on a path from $\rho_{ab}$ to $v_a$ or from $\rho_{ab}$ to $v_b$ that is not $v_a$ or $v_b$ is an ancestor of $u$. By maximality, the paths from $u$ to $a$ and from $u$ to $b$ are unique. Let the parent of $v_a$ on the path from $u$ to $a$ be $p_a$ and let the parent of $v_b$ on the path from $u$ to $b$ be $p_b$. Let $p_{ab}$ be the last vertex on the path from $u$ to $p_a$ that is an ancestor of $b$. Let $q_a$ and $q_b$ be the other parents of $v_a$ and $v_b$, respectively. Let $q_{ab}$ be the ancestor of $q_a$ and $q_b$ at maximal distance from $\rho_{ab}$. Note that $q_{ab}$ is well defined. To see this, assume that there were two possibilities $q_{ab}$ and $q'_{ab}$, both ancestors of $q_a$ and $q_b$, and at maximal distance from $\rho_{ab}$. By maximality, $q_{ab}$ and $q'_{ab}$ are non-comparable, so there would have to be a reticulation where the paths from $q_{ab}$ and $q'_{ab}$ to $q_a$ merge, and where the paths from $q_{ab}$ and $q'_{ab}$ to $q_b$ merge. These two reticulations would also have to be non-comparable, contradicting the exclusion of (EC7).

If $q_{ab}$ is not an ancestor of $u$, then we are in Situation~2. If $q_{ab}$ is an ancestor of $u$, then we are in Situation~3.
\end{proof}

We now present the subroutines for the Special Case when the sibling of the parent of $a$ and $b$ in $\cT$ is a single leaf $c$.

\begin{algorithm}[H]\label{SC:1}
\floatname{algorithm}{Subroutine}
 \caption{\textsc{Special Case 1.1}}
\begin{algorithmic}[1]
 \Statex\textbf{Requirement:} The leaf $c$ is not a descendant of $\rho_{ab}$.
        \Statel{Construct $\cN'$ from $\cN$ by deleting the arc $(p_a, v_a)$.}
                \Statel{Return \textsc{TreeDetection$(\cN', \cT)$}.}
 \end{algorithmic}
\end{algorithm}
This is valid since any embedding of $\cT$ in $\cN$ must have the last common ancestor of $a$ and $c$ as an ancestor of $\rho_{ab}$. Thus there are no paths (in the embedding) from $\rho_{ab}$ to any leaves other than $a$ and $b$, so we can resolve $v_a$ either way without changing the result of the embedding. In particular, $\cN$ displays $\cT$ if and only if $\cN'$ displays $\cT$.

The justification of the next subroutine is similar to that given for {\sc Easy Case~2} and is omitted.

\begin{algorithm}[H]
\floatname{algorithm}{Subroutine}
 \caption{\textsc{Special Case 1.2}}
\begin{algorithmic}[1]
 \Statex\textbf{Requirement:} There is a path $P$ from $\hat v_c$ to a reticulation not verified by $c$.
        \Statel{Let $u$ be the first reticulation on $P$ not verified by $c$ and let $p_u$ be the parent of $u$ on $P$.}
        \Statel{Construct $\cN'$ from $\cN$ by deleting the arc $(p_u, u)$.}
        \Statel{Return \textsc{TreeDetection$(\cN', \cT)$}.}
 \end{algorithmic}
\end{algorithm}

\begin{algorithm}[H]
\floatname{algorithm}{Subroutine}
 \caption{\textsc{Special Case 1.3}}
\begin{algorithmic}[1]
 \Statex\textbf{Requirement:} There is an ancestor $u$ of $\hat v_c$ that is a reticulation and not an ancestor of $a$ or $b$.
        \Statel{Let $p_c$ be a parent of $\hat v_c$ that is a descendant of $u$}
        \Statel{Construct $\cN'$ from $\cN$ by deleting the arc $(p_c,\hat v_c)$.}
        \Statel{Return \textsc{TreeDetection$(\cN', \cT)$}.}
 \end{algorithmic}
\end{algorithm}
Since $u$ is an ancestor of $\hat v_c$, but not an ancestor of $a$ or $b$, we must have $\hat v_c=v_c$, and so $u$ must be verified by some leaf $\ell\neq c$. Since $u$ is not an ancestor of $a$ or $b$, it follows that $\ell\not\in \{a, b\}$. If an embedding of $\cT$ uses the arc $(p_c,\hat v_c)$, then it must display the rooted triple $c\ell|a$; a contradiction. Thus there is an embedding of $\cT$ in $\cN$ if and only if there is an embedding of $\cT$ in $\cN'$.

\begin{algorithm}[H]
\floatname{algorithm}{Subroutine}
 \caption{\textsc{Special Case 1.4}}
\begin{algorithmic}[1]
 \Statex\textbf{Requirement:} The leaf $c$ does not verify $\rho_{ab}$.
        \Statel{Let $p_c$ be a parent of $\hat v_c$ that is not a descendant of $\rho_{ab}$.}
        \Statel{Construct $\cN'$ from $\cN$ by deleting the arc $(p_c,\hat v_c)$.}
        \Statel{Return \textsc{TreeDetection$(\cN', \cT)$}.}
 \end{algorithmic}
\end{algorithm}
If $c$ does not verify $\rho_{ab}$, then there is a (directed) path $P$ from an ancestor of $\rho_{ab}$ to $\hat v_c$ not via $\rho_{ab}$, and $p_c$ is on this path. If we are in Situation~2 or Situation~3, then there is no embedding of $\cT$ in $\cN$ that uses arc $(p_c,\hat v_c)$. Otherwise, the embedding of $\cT$ displays the rooted triple $a\ell|c$, where $\ell$ is a leaf verifying $u$; a contradiction. If we are in Situation~1 and there is an embedding of $\cT$ that uses arc $(p_c,\hat v_c)$, then in the embedding the last common ancestor of $a$ and $c$ is an ancestor of $\rho_{ab}$, so (as above) we could adjust the embedding by resolving $v_a,v_b$ either way and it would still be valid. By (SC1.3), there is an unique path $Q$ from $\rho_{ab}$ to $\hat v_c$. Now $Q$ cannot use vertex $p_{ab}$ and vertex $q_{ab}$, so we can resolve $v_a$ or $v_b$ towards the ancestor not on $Q$ and $\hat v_c$ towards its parent on $Q$, and still have a valid embedding, but one not using arc $(p_c,\hat v_c)$. Thus  there is an embedding of $\cT$ in $\cN$ if and only if there is an embedding of $\cT$ in $\cN'$.

To complete the analysis of the first special case, we present that last of its subroutines and justify the actions in Table~\ref{singlectable}.

\begin{algorithm}[H]\label{SC1.5}
\floatname{algorithm}{Subroutine}
 \caption{\textsc{Special Case 1.5}}
\begin{algorithmic}[1]
 \Statex\textbf{Requirement:} Easy Cases 1--7 and Special Cases 1.1--1.4 do not apply. The sibling of the parent of $a$ and $b$ in $\cT$ is a single leaf $c$.
        \Statel{Determine which of the entries in Table~\ref{singlectable} applies.}
        \Statel{Construct $\cN'$ from $\cN$ by deleting the arc listed and set $\cT'=\cT$, or construct $\cN'=\cN\backslash b$ and $\cT'=\cT\backslash b$ if $b$ is listed instead of an arc.}
        \Statel{Return \textsc{TreeDetection$(\cN', \cT')$}.}
 \end{algorithmic}
\end{algorithm}

If an embedding of $\cT$ in $\cN$ uses the arc $(p_c,\hat v_c)$ and $p'_c$ is on the path $P_1$ or $P_2$, then $v_a$ and $v_b$ will have to resolve towards $q_a$ and $q_b$. Otherwise, the embedding does not have a vertex whose leaf descendants are just $a$ and $b$. Hence, if each of $p'_c$ and $q'_c$ lie on $P_1$ or $P_2$, then we can delete $(p_a, v_a)$. Similarly, if each of $p'_c$ and $q'_c$ lie on $P_3$ or $P_4$, then we can delete $(q_a, v_a)$.

If we are in Situation~1, and $p'_c$ lies on $P_1$ or $P_2$ and $q'_c$ lies on $P_3$ or $P_4$, then, in an embedding of $\cT$ in $\cN$, the last common ancestor of $a$ and $c$ would be $\rho_{ab}$ which ever way we resolved $\hat v_c$. Thus, in any of these cases, there is no path to a leaf other than $a$, $b$, and $c$ from $\rho_{ab}$ and so, if an embedding exists, one exists with $\hat v_c$ resolved each way (and $v_a$ and $v_b$ resolved accordingly). Thus, we can delete $(p_c, \hat v_c)$. Similar argument holds if we are in Situation~1, and $p'_c$ lies on $P_3$ or $P_4$ and $q'_c$ lies on $P_1$ or $P_2$.

If we are in Situations~2 or~3, and $p'_c$ lies on $P_3$ or $P_4$, no embedding of $\cT$ can resolve $\hat v_c$ towards $p_c$. This follows as then $v_a$ and $v_b$ would have to resolve towards $p_a$ and $p_b$ and, if $\ell_u$ is a leaf that verifies $u$ (note $\ell_u\neq c$), then the embedding of $\cT$ in $\cN$ would display the rooted triple $a\ell_u|c$; a contradiction. So we can delete arc $(p_c,\hat v_c)$. Similarly, if we are in Situations~2 or~3, and $q'_c$ lies on $P_3$ or $P_4$, then we can delete arc $(q_c, \hat v_c)$.

Now suppose $p'_c$ is an ancestor of $p_{ab}$ (denoted $>p_{ab}$ in the table). Then, if an embedding of $\cT$ in $\cN$ uses the arc $(p_c,\hat v_c)$ and arc $(q_a,v_a)$, the last common ancestor of $a$ and $c$ in the embedding would be an ancestor of $p'_c$. Thus there is no path to a leaf other than $a$, $b$, and $c$ from $p'_c$ and so an embedding exists with  $v_a$ and $v_b$ resolved towards $p_a$ and $p_b$, respectively. Similarly, if $p'_c$ is an ancestor of $q_{ab}$ and an embedding of $\cT$ in $\cN$ exists using arcs $(p_c, \hat v_c)$ and $(p_a, v_a)$, then one exists with $v_a$ and $v_b$ resolved towards $q_a$ and $q_b$, respectively. A symmetric argument holds with $q'_c$ instead of $p'_c$.

Thus if $p'_c$ is an ancestor of $p_{ab}$ and $q'_c$ lies on $P_3$ or $P_4$ (or vice versa), then we can delete arc $(q_a, v_a)$ and the resulting phylogenetic network on $X$ displays $\cT$ if and only if $\cN$ displays $\cT$. Likewise, if $p'_c$ is an ancestor of $q_{ab}$ and $q'_c$ lies on $P_1$ or $P_2$, then we can delete  arc $(p_a, v_a)$. Also, if both $p'_c$ and $q'_c$ are ancestors of $p_{ab}$, we can delete arc $(q_a,v_a)$, and if both are ancestors of $q_{ab}$, we can delete arc $(p_a,v_a)$. In each of the last three subcases, the resulting phylogenetic network on $X$ displays $\cT$ if and only if $\cN$ displays $\cT$.

If $p'_c$ is an ancestor of $p_{ab}$ and $q'_c$ lies on $P_1$ or $P_2$, and an embedding of $\cT$ in $\cN$ exists that resolves $\hat v_c$ towards $q_c$, then in this embedding $v_a$ and $v_b$ must resolve towards $q_a$ and $q_b$, respectively, in which case, in the embedding, the last common ancestor of $a$ and $c$ is $\rho_{ab}$ in Situation~1, and an ancestor of $u$ in Situations~2 and~3. In Situation~1, we could therefore resolve $\hat v_c$ towards $p_c$ without changing the topology of the embedded tree. In Situations~2 and~3, if $\ell_u\neq c$, where $\ell_u$ is a leaf that verifies $u$, then the embedding displays the rooted triple $c\ell_u|a$; a contradiction. Thus $\ell_u=c$, and therefore $p'_c$ lies on the path from $u$ to $p_{ab}$. We can therefore resolve $\hat v_c$ towards $p_c$ without changing the topology of the embedded tree. It follows that in either case, we can delete arc $(q_c, \hat v_c)$ and the resulting phylogenetic network on $X$ displays $\cT$ if and only if $\cN$ displays $\cT$. A similar argument holds if we interchange the roles of $p'_c$ and $q'_c$, in which case, we delete the arc $(p_c, \hat{v}_c)$. A symmetric argument holds in Situation~1 if $p'_c$ is an ancestor of $q_{ab}$ and $q'_c$ lies on $P_3$ or $P_4$ (or vice versa).

The remaining subcase is that $p'_c$ is an ancestor of $p_{ab}$ and $q'_c$ is an ancestor of $q_{ab}$, or vice versa. We now show that $\cN\backslash b$ displays $\cT\backslash b$ if and only if $\cN$ displays $\cT$.
In any embedding of $\cT\backslash b$ in $\cN\backslash b$, there is a vertex whose leaf descendants are precisely $a$ and $c$. Also, in the embedding, the peak of the embedded path from $a$ to $c$ must be an ancestor of either $p_{ab}$ or $q_{ab}$, since there is no (directed) path from either to $c$. If this peak is an ancestor of $p_{ab}$, then we can extend this embedding to an embedding of $\cT$ in $\cN$ by adding arcs $(p_b, v_b)$ and $(v_b, b)$, while if it is an ancestor of $q_{ab}$, then we extend the embedding to such an embedding by adding arcs $(q_b, v_b)$ and $(v_b, b)$. These must still be valid embeddings since there is a tree path from $p_{ab}$ to $p_b$, and from $q_{ab}$ to $q_b$, so in the embedding $b$ is in the correct location. Conversely, any embedding of $\cT$ in $\cN$ gives rise to an embedding of $\cT\backslash b$ in $\cN\backslash b$ by deleting $b$ and its incoming arc. This completes the justification of Table~\ref{singlectable}.

Finally, we describe the subroutines for the special case when the sibling of the parent of $a$ and $b$ in $\cT$ is the parent of a second cherry $\{c, d\}$.

\begin{algorithm}[H]
\floatname{algorithm}{Subroutine}
 \caption{\textsc{Special Case 2.1}}
\begin{algorithmic}[1]
 \Statex\textbf{Requirement:} The vertices $\rho_{cd}$ and $\rho_{ab}$ are non-comparable.
        \Statel{Construct $\cN'$ from $\cN$ by deleting the arc $(p_a, v_a)$.}
                \Statel{Return \textsc{TreeDetection$(\cN', \cT)$}.}
 \end{algorithmic}
\end{algorithm}

If $\rho_{cd}$ and $\rho_{ab}$ are non-comparable, then, in any embedding of $\cT$ in $\cN$, the last common ancestor of $a$ and $c$ is an ancestor of $\rho_{ab}$, and so there are no paths in the embedding from $\rho_{ab}$ to any leaf $\ell$ other than $a$ and $b$; otherwise, the embedding displays the rooted triple $a\ell|c$. Thus, we can resolve $v_a$ either way, and the resulting phylogenetic network on $X$ displays $\cT$ if and only if $\cN$ displays $\cT$.

\begin{algorithm}[H]
\floatname{algorithm}{Subroutine}
 \caption{\textsc{Special Case 2.2}}
\begin{algorithmic}[1]
 \Statex\textbf{Requirement:} The vertex $\rho_{cd}$ is not on one of the paths from $\rho_{ab}$ to either $v_a$ or $v_b$.
        \Statel{Construct $\cN'$ from $\cN$ by deleting the arc $(p_c, v_c)$.}
        \Statel{Return \textsc{TreeDetection$(\cN', \cT)$}.}
 \end{algorithmic}
\end{algorithm}

If $\rho_{cd}$ is not on one of the paths from $\rho_{ab}$ to either $v_a$ or $v_b$, then, in any embedding of $\cT$ in $\cN$, the last common ancestor of $a$ and $c$ is an ancestor of $\rho_{cd}$, and so there are no paths in the embedding from $\rho_{cd}$ to a leaf $\ell$ other than $c$ and $d$; otherwise the embedding displays the rooted triple $c\ell|a$. Therefore we can resolve $v_c$ either way, and the resulting phylogenetic network on $X$ displays $\cT$ if and only if $\cN$ displays $cT$.

\begin{algorithm}[H]
\floatname{algorithm}{Subroutine}
 \caption{\textsc{Special Case 2.3}}
\begin{algorithmic}[1]
 \Statex\textbf{Requirement:} There is a reticulation $u$ that is an ancestor of $p_{cd}$ or $q_{cd}$ but it is not an ancestor of $a$ or $b$.
	\If{$u$ is an ancestor of $p_{cd}$}
		\Statel{Construct $\cN'$ from $\cN$ by deleting the arc $(p_c, v_c)$.}
	\Else
	        \Statel{Construct $\cN'$ from $\cN$ by deleting the arc $(q_c, v_c)$.}
	       \EndIf
        \Statel{Return \textsc{TreeDetection$(\cN', \cT)$}.}
 \end{algorithmic}
\end{algorithm}

Reticulation $u$ must be verified by some leaf $\ell\not\in \{a, b, c, d\}$ since $u$ is a descendant of $\rho_{cd}$ and an ancestor of $v_c$ and $v_d$, but not an ancestor of either $a$ or $b$. Thus any embedding that resolves $v_c$ towards $u$ would display the rooted triple $c\ell|a$; a contradiction.

We now justify the final actions in Table~\ref{cherryctable}.

\begin{algorithm}[H]\label{SC2.4}
\floatname{algorithm}{Subroutine}
 \caption{\textsc{Special Case 2.4}}
\begin{algorithmic}[1]
 \Statex\textbf{Requirement:} Easy Cases 1--7 and Special Cases 2.1--2.3 do not apply. The sibling of the parent of $a$ and $b$ in $\cT$ is the parent of a cherry $\{c, d\}$.
        \Statel{Determine which of the entries in Table~\ref{cherryctable} applies.}
        \Statel{Construct $\cN'$ from $\cN$ by deleting the arc listed and set $\cT'=\cT$, or construct $\cN'=\cN\backslash b$ and $\cT'=\cT\backslash b$ if $b$ is listed instead of an arc.}
        \Statel{Return \textsc{TreeDetection$(\cN', \cT')$}.}
 \end{algorithmic}
\end{algorithm}

If an embedding of $\cT$ in $\cN$ uses the arc $(p_c, v_c)$ and $p'_{cd}$ is on the path $P_1$ or $P_2$, then $v_a$ and $v_b$ will have to resolve towards $q_a$ and $q_b$, respectively, so that the embedding contains a vertex whose leaf descendants are $a$ and $b$. Likewise, if $p'_{cd}$ is an ancestor of $p_{ab}$ and $p'_{cd}=p_{cd}$, so that $c$ and $d$ do not meet before reaching an ancestor of $a$, then an embedding of $\cT$ in $\cN$ that uses arc $(p_c, v_c)$ implies $v_a$ and $v_b$ will have to resolve towards $q_a$ and $q_b$, respectively; otherwise, one of the rooted triples $ac|d$ and $ad|c$ would be displayed by the embedding. The same outcomes hold if replace $(p_c, v_c$ and $p'_{cd}$ with $(q_c, v_c)$ and $q'_{cd}$, respectively, in the hypotheses. Thus if both $p'_{cd}$ and $q'_{cd}$ are of Type~1, we can delete $(p_a, v_a)$ since, either way $v_c$ resolves, the embedding cannot use that arc, and the resulting phylogenetic network on $X$ displays $\cT$ if and only if $\cN$ displays $\cT$. Similarly, if both $p'_{cd}$ and $q'_{cd}$ are of Type~2, we can delete $(q_a,v_a)$.

If we are in Situation~1 and, without loss of generality, $p'_{cd}$ is Type~1 and $q'_{cd}$ is Type~2, then, in an embedding of $\cT$ in $\cN$, the last common ancestor of $a$ and $c$ would be $\rho_{ab}$ which ever way we resolved $v_c$. Thus, in any such embedding, there is no path to a leaf other than $a$, $b$, $c$, and $d$ from $\rho_{ab}$ and so, if an embedding exists, one exists with $v_c$ resolved each way (and $v_a$ and $v_b$ resolved accordingly). Thus we can delete $(p_c, v_c)$, and the resulting phylogenetic network on $X$ displays $\cT$ if and only if $\cN$ displays $\cT$.

If we are in Situations~2 or~3, and $p'_{cd}$ is of Type~2, then no embedding of $\cT$ in $\cN$ can resolve $ v_c$ towards $p_c$. This follows as then $v_a$ and $v_b$ would have to resolve towards $p_a$ and $p_b$, respectively. But then if $\ell_u$ is a leaf that verifies $u$, and noting that $\ell_u\not\in \{c, d\}$, the embedding would display the rooted triple $a\ell_u|c$; a contradiction. So we can delete arc $(p_c, v_c)$, and the resulting phylogenetic network on $X$ displays $\cT$ if and only if $\cN$ displays $\cT$. Similarly, if $q'_{cd}$ is of Type~2, we can delete arc $(q_c, v_c)$.

Now suppose $p'_{cd}$ is of Type~3. Then if an embedding of $\cT$ in $\cN$ uses the arc $(p_c, v_c)$ and arc $(q_a,v_a)$, it follows that the last common ancestor of $a$ and $c$ in this embedding would be an ancestor of $p'_{cd}$. Thus in such an embedding there is no path to a leaf other than $a$, $b$, $c$, and $d$ from $p'_{cd}$, and so an embedding also exists with $v_a$ and $v_b$ resolved towards $p_a$ and $p_b$. Similarly, if $p'_{cd}$ is of Type~4 and an embedding of $\cT$ in $\cN$ exists using arcs $(p_c, v_c)$ and $(p_a, v_a)$, then one exists with $v_a$ and $v_b$ resolved towards $q_a$ and $q_b$. A symmetric argument holds with $q'_{cd}$ instead of $p'_{cd}$.

Thus if $p'_{cd}$ is of Type~3 and $q'_c$ is of Type~2 (or vice versa), we can delete arc $(q_a, v_a)$, and the resulting phylogenetic network on $X$ displays $\cT$ if and only if $\cN$ displays $\cT$. Likewise, if $p'_{cd}$ is of Type~4 and $q'_{cd}$ is of Type~1 (or vice versa), we can delete arc $(p_a,v_a)$. Also, if both $p'_{cd}$ and $q'_{cd}$ are of Type~3, we can delete arc $(q_a, v_a)$, and if both are of Type~4, we can delete arc $(p_a, v_a)$. In each of the last three subcases, the resulting phylogenetic network displays $\cT$ if and only if $\cN$ displays $\cT$.

Suppose that $p'_{cd}$ is of Type~3 and $q'_{cd}$ is of Type~1, and that an embedding of $\cT$ in $\cN$ exists that resolves $v_c$ and $v_d$ towards $q_c$ and $q_d$, respectively. Then, in this embedding, $v_a$ and $v_b$ must resolve towards $q_a$ and $q_b$, respectively, and the last common ancestor of $a$ and $c$ is $\rho_{ab}$ in Situation~1, and an ancestor of $u$ in Situation~2 and~3. In Situation~1, we could therefore resolve $v_c$ and $v_d$ towards $p_c$ and $p_d$ without changing the topology of the embedded tree. In Situations~2 and~3, if $l_u\not\in \{c, d\}$, where $\ell_u$ is a leaf that verifies $u$, then the embedding displays the rooted triple $c\ell_u|a$; a contradiction. Thus, in Situations~2 and~3, we may assume $\ell_u\in \{c, d\}$, and therefore $p'_{cd}$ lies on the path from $u$ to $p_{ab}$. We can therefore resolve $v_c$ to $p_c$ without changing the topology of the embedding of $\cT$. Regardless of the situation, we can delete arc $(q_c, v_c)$, and the resulting phylogenetic network on $X$ displays $\cT$ if and only if $\cN$ displays $\cT$. A similar argument holds if $p'_{cd}$ is of Type~1 and $q'_{cd}$ is of Type~3, in which case, we delete $(p_c, v_c)$. A symmetric argument holds in Situation~1 if $p'_{cd}$ is of Type~4 and $q'_{cd}$ is of Type~2 (or vice versa).

The remaining case is that $p'_{cd}$ is of Type 3 and $q'_{cd}$ is of Type~2 (or vice versa). An identical argument to that given to justify the entries in Table~\ref{singlectable} corrsponding to deleting the leaf $b$ in the first special case applies. In particular, this gives that $\cN\backslash b$ displays $\cT\backslash b$ if and only if $\cN$ displays $\cT$.

\section{Running Time}
\label{running}

In this section we consider the running time of the {\sc TreeDetection} algorithm. The input to the algorithm is a reticulation-visible network $\cN$ on a finite set $X$ and a rooted binary phylogenetic $X$-tree $\cT$. Traditionally running times in phylogenetics are given in terms of the size of the taxa set, $|X|$, although  the representation of a general phylogenetic network may be much larger that $O(|X|)$. However in the case of reticulation-visible networks, the number of vertices of the network is at most $8|X|-7$, and so the input in this case is of size $O(|X|)$ (see Theorem~\ref{sharp}).

Clearly determining if $|X|=1$ can be done in constant time, and finding a cherry $a,b$ in $\cT$ can be done in linear time. The main body of the algorithm then boils down to determining which of several cases occurs, and then running a specific subroutine. We first show that we can determine which case occurs in $O(|X|^2)$ steps. To check the Easy Cases we need to know the vertices $v_a,v_b$ and $\rho_{ab}$, and their ancestors and descendants. For each vertex of $\cN$ we prepare a list of:
\begin{itemize}
\item all ancestors
\item all descendants
\item all those descendants that verify the vertex
\item all those descendants reachable by a tree-path.
\end{itemize} We also order the vertices by distance of the vertex from the root; since there are less than $8|X|$ vertices altogether, this can be done in $O(|X|^2)$ steps using a depth first search. We can then scan through the list in order to determine $v_a,v_b$ and $\rho_{ab}$, by checking whether each vertex satisfies the conditions. Further scanning through the list and applying simple checks at each vertex can determine which, if any, of the Easy Cases occurs, and since we only need compare at most two vertices (EC7), this can all be achieved in $O(|X|^2)$ steps.

Depending on whether the sibling of the cherry $a,b$ is a single leaf $c$ or another cherry $c,d$, we must then check which of several Special Cases occurs. However, once we have identified $\hat v_c$, or $\rho_{cd},p_{cd},q_{cd}$, by looking through the lists above, it is again a matter of checking whether vertices with certain straightforward conditions on their ancestors and descendants exist or not, and this can be done in $O(|X|^2)$ steps.

Finally, each of the subroutines either returns \textbf{No} or modifies $\cN$ and possibly $\cT$ before recursively calling {\sc TreeDetection}. Determining whether to return \textbf{No} or make the modifications can be  accomplished in $O(|X|^2)$ steps, since they again require at most checking the existence of vertices with specific ancestor or descendant conditions, possibly including reachability by a tree-path, but since the network is of linear size, this is can be done, and then any modifications of $\cN$ and  $\cT$ are only by deleting a constant number of arcs or vertices. This analysis includes determining which row of the final tables is appropriate, since we can identify the vertices $p'_c,q'_c$, or $p'_{cd},q'_{cd}$ by scanning through the vertex list and checking straightforward ancestor/descendant conditions, and then determining which of the rows is applicable is again checking ancestor and descendant conditions on these vertices, e.g. we can tell if a vertex is on a path from $p_{ab}$ to $p_a$ by checking if it is a descendant of $p_{ab}$ and an ancestor of $p_a$. 

Thus the entire algorithm breaks down into a constant number of checks, each of which can be accomplished in $O(|X|^2)$ steps, followed by a small modification to $\cN$ and possibly $\cT$, and a recursive call to {\sc TreeDetection} on an input that is smaller in either the number of reticulations or the size of the leaf set. Since the number of reticulations is linear in $|X|$, there can be at most $O(|X|)$ recursive calls, and so the entire algorithm completes in $O(|X|^3)$ steps.

\section{Sharp Bounds}
\label{visible}




In this section, we establish Theorem~\ref{sharp}.

\begin{proof}[Proof of Theorem~\ref{sharp}]
Let $\cN$ be a reticulation-visible network on $X$ with $n$ vertices in total and $r$ reticulations. Let $m=|X|$. We first show that
\begin{align}
n\le 8m-7
\label{ineq1}
\end{align}
and
\begin{align}
r\le 3m-3.
\label{ineq2}
\end{align}
The proof of these two inequalities is by induction on $m$. If $m=1$, then $\cN$ consists of a single vertex, and~(\ref{ineq1}) and~(\ref{ineq2}) hold. Suppose that $m\ge 2$, and that~(\ref{ineq1}) and~(\ref{ineq2}) hold for all reticulation-visible networks with fewer leaves.

First assume that $\cN$ has a cherry $\{a, b\}$. Let $\cN'$ be the network obtained from $\cN$ by reducing $\{a, b\}$. Since every vertex in $\cN$ is visible, it follows that every vertex in $\cN'$ is visible, and so $\cN'$ is a reticulation-visible network. Therefore, as $\cN'$ has $n-2$ vertices, $r$ reticulations, and $m-1$ leaves, it follows by the induction assumption that $n-2\leq 8(m-1)-7$ and $r\le 3(m-1)-3$, so
$$n\leq 8m-13\leq 8m-7$$
and
$$r\leq 3m-6\leq 3m-3.$$
Thus (\ref{ineq1}) and~(\ref{ineq2}) hold.

Now assume that $\cN$ does not contain a cherry. Let $v$ be a reticulation in $\cN$ such that amongst all reticulations in $\cN$ it is at maximum distance from the root $\rho$. Let $P$ be a path from $\rho$ to $v$ that realises this maximum distance. By maximality and the assumption that $\cN$ has no cherry, the child vertex of $v$ is a leaf, $\ell$ say. Also, note that, as $\cN$ is reticulation visible, neither parent of $v$ is a reticulation. Let $\cN'$ be the network obtained from $\cN$ by deleting the vertices $v$ and $\ell$ and their incident arcs, contracting any resulting degree-two vertices, and then replacing any parallel arcs with a single arc and contracting any degree-two vertices resulting from this replacement. Since the final step in this process could not have created any further parallel arcs, it is easily seen that $\cN'$ is a phylogenetic network on $X-\{\ell\}$. Furthermore, in the process of obtaining $\cN$ from $\cN'$, we initially lose $1$ reticulation and $4$ vertices in total. Additionally, at most two pairs of parallel arcs are replaced with a single arc. Each such replacement, loses $1$ reticulation and $2$ vertices in total. Thus, if $n'$ and $r'$ denotes the total number of vertices and the number of reticulations in $\cN'$, we have
\begin{align}
n-8\le n'\le n-4
\label{eqn1}
\end{align}
and
\begin{align}
r-3\le r'\le r-1.
\label{eqn2}
\end{align}

We next show that either $\cN'$ or a phylogenetic network obtained from $\cN$ in an analogous way is reticulation visible, thereby obtaining a phylogenetic network that satisfies the induction assumptions. Let $p$ and $q$ denote the parents of $v$. If $\ell$ does not verify the visibility of any reticulation other than $v$ in $\cN$, then $\cN'$ is reticulation visible. Therefore, suppose that $\ell$ also verifies the visibility of a reticulation $w$ in $\cN$, where $w\neq v$. Without loss of generality, we may assume that $p$ is the last parent of $v$ on $P$. If $p$ is an ancestor of $q$, then $P$ is not a path of maximum distance from $\rho$ to $v$, so either $p$ is a descendant of $q$ or $p$ is non-comparable to $q$. In either case, this implies that $p$ has a child vertex, not equal $v$, that is not an ancestor of $v$. Let $v'$ denote this child vertex. By the maximality of $P$ and the assumption that $\cN$ has no cherries, $v'$ is either a leaf or a reticulation. If $v'$ is a leaf, then, as every path in $\cN$ from $\rho$ to $\ell$ passes through $w$, every path in $\cN$ from $\rho$ to $v'$ also passes through $w$. It now follows that if $v'$ is a leaf, $\cN'$ is reticulation visible. Therefore suppose that $v'$  is a reticulation. By the maximality of $P$ and the assumption that $\cN$ has no cherries, the child of $v'$ is a leaf, $\ell'$ say. Let $\cN'_1$ be the network obtained from $\cN$ by deleting $v'$ and $\ell'$ and their incident arcs, contracting any resulting degree-two vertices, and then replacing any parallel arcs with a single arc and contracting any degree-two vertex resulting from this replacement. As above, if $n'_1$ and $r'_1$ denote the total number of vertices and the number of reticulations in $\cN'_1$, then
\begin{align*}
n-8\le n'_1\le n-4
\end{align*}
and
\begin{align*}
r-3\le r'_1\le r-1.
\end{align*}

If $\ell'$ does not verify the visibility of any reticulation other than $v'$ in $\cN$, then, instead of applying the inductive step to $v$, apply the inductive step to $v'$. In particular, $\cN'_1$ is reticulation visible and therefore satisfies the induction assumptions. Thus we may assume that $\ell'$ also verifies the visibility of a reticulation $w'$ in $\cN$, where $w'\neq v'$. Now every path in $\cN$ from $\rho$ to $\ell$ passes through $w$, in particular, every path in $\cN$ from $\rho$ to $\ell$ using the arc directed into $p$ passes through $w$. In turn, this implies that every path in $\cN$ from $\rho$ to $\ell'$ using the arc directed into $p$ passes through $w$. We deduce that either $w$ is an ancestor of $w'$ or $w'$ is an ancestor of $w$ in $\cN$. If $w$ is an ancestor $w'$ in $\cN$, then every path in $\cN$ from $\rho$ to $\ell'$ passes through $w$, and so $\cN'$ is reticulation visible, in which case, $\cN'$ satisfies the induction assumptions. On the other hand, if $w'$ is an ancestor of $w$ in $\cN$, then every path in $\cN$ from $\rho$ to $\ell$ passes through $w'$, and so $\cN'_1$ is reticulation visible, in which case, $\cN'_1$ satisfies the induction assumptions.

Without loss of generality, we may assume that the induction assumptions hold for $\cN'$. By induction, and (\ref{eqn1}) and (\ref{eqn2}), it follows that
$$n-8\le n'\le 8(m-1)-7=8m-15,$$
so $n\le 8m-7$, and
$$r-3\le r'\le 3(m-1)-3=3m-6,$$
so $r\le 3m-3$. Hence~(\ref{ineq1}) and~(\ref{ineq2}) hold.

\begin{figure}
\center
\input{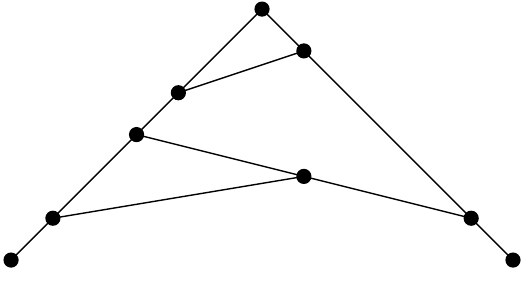_t}
\caption{A reticulation-visible network with $2$~leaves, $3$~reticulations, and $9$~vertices in total.}
\label{2sharp}
\end{figure}

To show that the upper bounds~(\ref{ineq1}) and~(\ref{ineq2}) are sharp for all integers $m\ge 1$, consider the reticulation-visible network shown in Fig.~\ref{2sharp}. Here the bound is tight for when $m=2$.
For all integers $m\ge 3$, an analogous example can be constructed by replacing a leaf with a complete copy of this network. The resulting network is reticulation-visible, and the number of leaves has increased by~$1$, the number of reticulations has increased by~$3$, and the total number of vertices has increased by~$8$, thereby retaining the tightness of the bound. Of course, if $m=1$, then $\cN$ consists of a single vertex and the bounds hold exactly.


\end{proof}

\end{document}